\def\BState{\State\hskip-\ALG@thistlm}
\newtheorem{theorem}{Theorem}
\newtheorem{lemma}{Lemma}
\newtheorem{prop}{Proposition}
\newtheorem{ass}{Assumption}
\begin{document}
	\title{The convergence of the Regula Falsi method}
	\author{ Trung Nguyen\\
	{\footnotesize VinAI Research, Vietnam}\\
	{\footnotesize nguyenductrung.samihust@gmail.com}
	}
	
	\date{}
	\maketitle 
	\begin{abstract}
	     Regula Falsi, or the method of false position, is a numerical method for finding an approximate solution to \(f(x)=0\) on a finite interval \([a,b]\), where \(f\) is a real-valued continuous function on \([a,b]\) and satisfies \(f(a)f(b)<0\). Previous studies proved the convergence of this method under certain assumptions about the function \(f\), such as, both the first and second derivatives of $f$ do not change sign on [\(a,b\)]. In this paper, we remove those assumptions and prove the convergence of the method for all continuous functions.
	\end{abstract}
\noindent
\section{Introduction}
Regula Falsi method appears in many undergraduate numerical analysis documents and books (e.g., \cite{anh,BF,G,sb,SR,vinh,w}). In \cite{anh,vinh}, this method is also called the \emph{chord method}. The goal of this method is to generate a sequence \(\{x_n\}_{n=0}^{\infty}\) that converges to a zero \(x^*\) of \(f\) when \(n\) tends to infinity. 
Some provided the proofs under some assumptions, such as both the first and second derivatives of \(f\) do not change sign on [\(a,b\)] (e.g., \cite{anh,G,sb,vinh}).
In this paper, we remove those assumptions and prove the convergence of the method for all continuous functions.

\section{Main result}
In this paper, the symbol \(\mathbb{N}\) denotes the set of natural numbers \(\{0,1,2,\ldots\}\) and \(\mathbb{N^*}=\mathbb{N}\backslash\{0\}\).
Let \(f: [a,b]\rightarrow\mathbb{R}\) be a continuous function on the interval \([a,b]\) and \(f(a)f(b)<0\).  
We construct a sequence \(\{x_n\}_{n=0}^{\infty}\) as follows.

\begin{algorithm}[H] 
	\caption{Regula Falsi}
	\label{al1}
Set	$u_1:=a$,\ $v_1:=b$. \\
\For{$n=1,2,3,\ldots$}{
	\begin{equation}
	\label{1}
	{x_n:=\frac{u_nf(v_n)-v_nf(u_n)}{f(v_n)-f(u_n)}}\text{ .}
	\end{equation}
	\If {$f(x_n)f(u_n)>0$}{\(u_{n+1}:=x_n,\) \(v_{n+1}:=v_n\)}
	\Else {$u_{n+1}:=u_n$, $ v_{n+1}:=x_n$}
}
For the completeness, we will set $x_0$ as follows:\\
\If{$f(x_1)f(a)>0$}{\(x_0:=b\)} \Else{$x_0:=a$.} 
\end{algorithm}

\vskip 0.2cm 

The main aim of this paper is to prove the following result.

\begin{theorem}\label{thm1} For any continuous function $f$ on a closed, bounded interval $[a,b]$ that satisfies $f(a)f(b)<0$, the sequence \(\{x_n\}_{n=0}^{\infty}\) generated by Algorithm \ref{al1} converges to a zero \(x^*\) of \(f\), i.e., \(f(x^*)=0\).
\end{theorem}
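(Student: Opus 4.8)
The plan is to extract the monotone bracketing structure first, reduce the theorem to the single assertion that $\{x_n\}$ converges, and then isolate one genuinely delicate configuration which I expect to be the crux.

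First I would prove by induction the invariant $f(u_n)f(v_n)<0$, and simultaneously that the signs of $f(u_n)$ and $f(v_n)$ never change along the run; after relabelling I may assume $f(u_n)<0<f(v_n)$ for all $n$. Formula \eqref{1} then exhibits $x_n$ as a strict convex combination of $u_n$ and $v_n$, so $x_n\in(u_n,v_n)$, and the update rule makes $(u_n)$ non-decreasing and $(v_n)$ non-increasing. Monotone convergence gives $u_n\uparrow u^\ast$ and $v_n\downarrow v^\ast$ with $a\le u^\ast\le v^\ast\le b$, and continuity gives $f(u^\ast)\le 0\le f(v^\ast)$. Since at every step $x_n$ equals either $u_{n+1}\le u^\ast$ or $v_{n+1}\ge v^\ast$, every convergent subsequence of $(x_n)$ has limit $u^\ast$ or $v^\ast$; hence $(x_n)$ has at most these two limit points, and it suffices to prove that $(x_n)$ converges, since both candidate limits will turn out to be zeros.

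If $u^\ast=v^\ast=:x^\ast$ then $x_n\to x^\ast$ and $f(x^\ast)^2=f(u^\ast)f(v^\ast)\le 0$ forces $f(x^\ast)=0$. So assume $u^\ast<v^\ast$. Passing to the limit in \eqref{1}: if $f(v^\ast)-f(u^\ast)\neq 0$ then $x_n\to \frac{u^\ast f(v^\ast)-v^\ast f(u^\ast)}{f(v^\ast)-f(u^\ast)}$; when $f(u^\ast)<0<f(v^\ast)$ this limit is a strict convex combination lying in the \emph{open} interval $(u^\ast,v^\ast)$, contradicting $x_n\notin(u^\ast,v^\ast)$, so that case cannot occur and therefore $f(u^\ast)f(v^\ast)=0$. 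If exactly one of the two values vanishes, the denominator limit is nonzero and the same computation gives $x_n\to u^\ast$ (respectively $x_n\to v^\ast$), namely the vanishing endpoint, and convergence to a zero follows. The remaining, and hardest, case is $u^\ast<v^\ast$ with $f(u^\ast)=f(v^\ast)=0$, where the limit in \eqref{1} is indeterminate.

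In this degenerate case I must rule out the \emph{oscillation} in which both branches of the update recur infinitely often, equivalently in which $u^\ast$ and $v^\ast$ are both genuine limit points and $(x_n)$ diverges. Writing $\epsilon_n=-f(u_n)>0$ and $\delta_n=f(v_n)>0$, the requirement $x_n\le u^\ast$ forced on every $u$-step reads $\epsilon_n(v_n-u^\ast)\le\delta_n(u^\ast-u_n)$, while the requirement $x_n\ge v^\ast$ on every $v$-step reads $\delta_n(v^\ast-u_n)\le\epsilon_n(v_n-v^\ast)$; since $u^\ast-u_n,\,v_n-v^\ast\to 0$ while $v_n-u^\ast,\,v^\ast-u_n\to v^\ast-u^\ast>0$, these force $\epsilon_n/\delta_n\to 0$ along $u$-steps and $\epsilon_n/\delta_n\to+\infty$ along $v$-steps. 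I would then combine the two inequalities across a \emph{switch} (a $u$-step immediately followed by a $v$-step, infinitely many of which must occur if both branches recur), track how $(\epsilon_n,\delta_n)$ are updated, and use the telescoping bounds $\sum(u_{n+1}-u_n)=u^\ast-a$ and $\sum(v_n-v_{n+1})=b-v^\ast$ to force $(u^\ast-u_n)(v_n-v^\ast)$ to stay bounded below, which is impossible.

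The main obstacle is precisely this last step. The delicate quantity is $\epsilon_{n+1}=-f(u_{n+1})=-f(x_n)$, the value of $f$ at the freshly produced point: the switch inequalities bound it from \emph{below}, but closing the contradiction requires controlling it from \emph{above} near $u^\ast$, and for a merely continuous, possibly highly non-monotone $f$ this is not automatic. The intended resolution is to obtain that control geometrically rather than through any regularity of $f$—using only the convex-combination form of \eqref{1}, the facts that $x_n=u_{n+1}\to u^\ast$ and $x_n=v_{n+1}\to v^\ast$ along the respective subsequences, and that each step moves one endpoint by a summable increment—so that infinitely many switches become arithmetically incompatible with the two gaps $u^\ast-u_n$ and $v_n-v^\ast$ tending to $0$. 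Once oscillation is excluded, eventually only one endpoint is ever updated, $(x_n)$ converges to that endpoint, and by the earlier analysis that endpoint is a zero of $f$.
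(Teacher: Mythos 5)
Your reduction is correct and, in fact, cleaner than the paper's own route: the monotone bracketing $u_n\uparrow u^\ast$, $v_n\downarrow v^\ast$, the observation that every limit point of $(x_n)$ lies in $\{u^\ast,v^\ast\}$, and the passage to the limit in \eqref{1} dispose of every case except $u^\ast<v^\ast$ with $f(u^\ast)=f(v^\ast)=0$ in a few lines (the paper reaches the same point more laboriously, via the ratio $r_n$ of Lemma \ref{lemma 1}, Proposition \ref{Pro1}, and Lemmas \ref{lemma3}--\ref{lemma5}). But that last degenerate case is not a technical remainder --- it is the entire content of the theorem, being exactly the configuration that the classical hypotheses on $f'$ and $f''$ exclude --- and your treatment of it is an announced plan, not a proof. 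You write down the two switch inequalities, say you would ``track how $(\epsilon_n,\delta_n)$ are updated'' and ``force $(u^\ast-u_n)(v_n-v^\ast)$ to stay bounded below,'' and then concede that the step needed to close this --- an upper bound on $\epsilon_{n+1}=|f(x_n)|$ near $u^\ast$ --- is not available for merely continuous $f$. That concession is accurate: no such upper bound exists in general, so the contradiction as you have set it up cannot be completed. This is a genuine gap, and it sits precisely at the crux of the theorem.

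What actually closes the case is not upper control of $|f(x_n)|$ but a chaining of the lower bounds you already have. Your switch inequalities give, for all large $n$, $|f(u_n)|<|f(v_n)|$ at every $u$-step and $|f(u_n)|>|f(v_n)|$ at every $v$-step. Moreover, during a maximal run of consecutive $u$-steps the endpoint $v_n$ --- hence the value $f(v_n)$ --- is frozen, and equals $f$ at the last $v$-side point produced before the run. So if $x_q$ is that $v$-side point and $x_p$ is the last point of the $u$-run (so that step $p+1$ is a $v$-step), then $|f(x_p)|=|f(u_{p+1})|>|f(v_{p+1})|=|f(x_q)|$, and symmetrically across $v$-runs. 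Hence $|f|$ evaluated at the successive switch points is strictly increasing, while in the degenerate case $f(u^\ast)=f(v^\ast)=0$ and continuity force $|f(x_n)|\to 0$ --- a contradiction obtained purely from quantities you already control. This is exactly the paper's ``jump'' argument (part ii) of Lemma \ref{lemma5} together with the two-case analysis after it); grafting it onto your setup would complete your proof, but as submitted the proposal does not prove the theorem.
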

The proof of this theorem is lengthy, and hence, it will be divided into several parts. Clearly, if \(f(x_{n_0})=0\) for some \(n_0\in \mathbb{N}\), then simple induction implies that \(x_n=x_{n_0}\) for all \(n\geq n_0\) and the desired convergence is obvious. Now, we consider the case where $f(x_n)\not=0$ for all $n$.  

\begin{lemma}\label{lemma 1}
Considering the sequence \(\{r_n\}_{n=1}^{\infty}\) defined by
\begin{equation}
\label{2}
r_n:=\frac{v_{n+1}-u_{n+1}}{v_n-u_n} \ .
\end{equation}
Then for each \(n\), we have that:
\begin{equation}\label{3}
r_n = 
\begin{cases}
 & \cfrac{1}{1+|\frac{f(u_n)}{f(v_n)}|} \ \mbox{ if } u_{n+1}=x_n \mbox{ and } v_{n+1}=v_n,  \\
 & \cfrac{1}{1+|\frac{f(v_n)}{f(u_n)}|} \ \mbox{ if } u_{n+1}=u_n \mbox{ and } v_{n+1}=x_n. 
 \end{cases}
\end{equation}
\end{lemma}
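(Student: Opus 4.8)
The plan is to treat both cases of \eqref{3} by the same direct computation, feeding the closed form \eqref{1} for $x_n$ into the definition \eqref{2} of $r_n$. Before touching the algebra, I would record the invariant that $f(u_n)f(v_n)<0$ for every $n$, since it is this sign condition that ultimately produces the absolute values on the right-hand side of \eqref{3} (and also guarantees $f(v_n)-f(u_n)\neq 0$, so that $x_n$ is well defined). The invariant is proved by induction on $n$: the base case is $f(u_1)f(v_1)=f(a)f(b)<0$, and for the inductive step one checks both branches of Algorithm \ref{al1}. In the branch $u_{n+1}=x_n,\,v_{n+1}=v_n$ we are in the case $f(x_n)f(u_n)>0$, so $f(u_{n+1})=f(x_n)$ has the same sign as $f(u_n)$, hence the opposite sign to $f(v_{n+1})=f(v_n)$; in the branch $u_{n+1}=u_n,\,v_{n+1}=x_n$ we have (using $f(x_n)\neq 0$) that $f(x_n)f(u_n)<0$, so again $f(u_{n+1})f(v_{n+1})=f(u_n)f(x_n)<0$.

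With the invariant in hand, I would next compute the two relevant interval lengths directly from \eqref{1}. Clearing the common denominator $f(v_n)-f(u_n)$ gives the clean identities
\[
v_n-x_n=\frac{(v_n-u_n)\,f(v_n)}{f(v_n)-f(u_n)},\qquad
x_n-u_n=\frac{-(v_n-u_n)\,f(u_n)}{f(v_n)-f(u_n)}.
\]
Dividing by $v_n-u_n$ then yields, in the first case $v_{n+1}-u_{n+1}=v_n-x_n$,
\[
r_n=\frac{f(v_n)}{f(v_n)-f(u_n)},
\]
and in the second case $v_{n+1}-u_{n+1}=x_n-u_n$,
\[
r_n=\frac{-f(u_n)}{f(v_n)-f(u_n)}.
\]

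Finally I would convert these two expressions into the absolute-value form stated in \eqref{3} using the sign invariant $f(u_n)f(v_n)<0$. Since $f(u_n)$ and $f(v_n)$ have opposite signs, $f(v_n)-f(u_n)$ has the same sign as $f(v_n)$ and as $-f(u_n)$, and moreover $|f(v_n)-f(u_n)|=|f(v_n)|+|f(u_n)|$; dividing numerator and denominator by $|f(v_n)|$ (respectively $|f(u_n)|$) turns the first quotient into $1/(1+|f(u_n)/f(v_n)|)$ and the second into $1/(1+|f(v_n)/f(u_n)|)$, which is exactly \eqref{3}. The only real subtlety here is the sign bookkeeping: one must verify that both possible sign configurations of the pair $(f(u_n),f(v_n))$ give the same final positive value, so I would check the two configurations explicitly rather than silently dropping signs. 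Everything else is routine algebra, so I expect this sign/absolute-value step to be the one requiring care.
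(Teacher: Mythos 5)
Your proposal is correct and follows essentially the same route as the paper: compute $v_{n+1}-u_{n+1}$ directly from \eqref{1}, divide by $v_n-u_n$, and use the opposite signs of $f(u_n)$ and $f(v_n)$ to rewrite the quotient in the absolute-value form of \eqref{3}. The only difference is that you make the invariant $f(u_n)f(v_n)<0$ explicit via induction, which the paper uses silently; that is a small gain in rigor, not a different argument.
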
	

\begin{proof} Without loss of generality, we consider only the case where \(u_{n+1}=u_n\) and \(v_{n+1}=x_n\). Thus, we have
\[	v_{n+1}-u_{n+1}=x_n-u_n=\frac{u_nf(v_n)-v_nf(u_n)}{f(v_n)-f(u_n)}-u_n=\frac{(u_n-v_n)f(u_n)}{f(v_n)-f(u_n)}.
\]
It follows by \((\ref{2})\) that 
\[	r_n=\frac{-f(u_n)}{f(v_n)-f(u_n)}=\frac{1}{1-\frac{f(v_n)}{f(u_n)}}=\frac{1}{1+|\frac{f(v_n)}{f(u_n)}|}\text{ ,}
\]
which is \eqref{3} exactly.\\
Analoguously, in the second case, where \(u_{n+1}=x_n\) and \(v_{n+1}=v_n\), we have that
\begin{equation*}
\label{6}
r_n=\frac{f(v_n)}{f(v_n)-f(u_n)}=\frac{1}{1-\frac{f(u_n)}{f(v_n)}}=\frac{1}{1+|\frac{f(u_n)}{f(v_n)}|}\ . 
\end{equation*}
This completes the proof of Lemma 1.
\end{proof}

\begin{lemma}
	\label{lemma2}
	For each fixed \(n_0\in\mathbb{N^*}\), the sequence \(\{x_n\}_{n\geq n_0}\) is either upper or lower bounded by \(x_{n_0}\). 
\end{lemma}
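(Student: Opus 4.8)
The plan is to derive the lemma from two purely structural facts about Algorithm~\ref{al1}: the brackets $[u_n,v_n]$ are nested, and each iterate lies strictly inside its own bracket. Granting these, the dichotomy is almost immediate, because the update at step $n_0$ makes $x_{n_0}$ \emph{exactly one endpoint} of the next bracket $[u_{n_0+1},v_{n_0+1}]$, and every later iterate is trapped inside that bracket and hence lies on the single side of $x_{n_0}$ determined by whether $x_{n_0}$ became the new left or the new right endpoint.

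First I would record, by induction on $n$, the invariant that the bracketing is preserved, namely $u_n<v_n$ and $f(u_n)f(v_n)<0$. The base case $n=1$ holds since $u_1=a<b=v_1$ and $f(a)f(b)<0$. For the inductive step I would write $x_n$ as the convex combination
\[
x_n=\frac{f(v_n)}{f(v_n)-f(u_n)}\,u_n+\frac{-f(u_n)}{f(v_n)-f(u_n)}\,v_n ,
\]
whose coefficients sum to $1$ and, by a short sign check using $f(u_n)f(v_n)<0$, are both positive; hence $u_n<x_n<v_n$. In the first branch ($f(x_n)f(u_n)>0$) one sets $u_{n+1}=x_n$, $v_{n+1}=v_n$, and since $f(x_n)$ shares the sign of $f(u_n)$ we get $f(u_{n+1})f(v_{n+1})=f(x_n)f(v_n)<0$; the complementary branch is symmetric. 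In either branch the new pair is obtained by replacing one endpoint with the interior point $x_n$, so $u_{n+1}<v_{n+1}$, the sign condition persists, and $[u_{n+1},v_{n+1}]\subseteq[u_n,v_n]$. Consequently $\{u_n\}$ is non-decreasing, $\{v_n\}$ is non-increasing, and the brackets are nested.

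With these in hand I would split on the branch taken at step $n_0$. If $f(x_{n_0})f(u_{n_0})>0$, then $u_{n_0+1}=x_{n_0}$, so for every $n\ge n_0+1$ the monotonicity of $\{u_n\}$ together with $x_n>u_n$ yields $x_n>u_n\ge u_{n_0+1}=x_{n_0}$; combined with the trivial inequality $x_{n_0}\ge x_{n_0}$ this shows $x_{n_0}$ is a lower bound for $\{x_n\}_{n\ge n_0}$. In the other branch $v_{n_0+1}=x_{n_0}$, and the mirror argument using $\{v_n\}$ non-increasing and $x_n<v_n$ gives $x_n<v_n\le v_{n_0+1}=x_{n_0}$, so $x_{n_0}$ is an upper bound. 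This is precisely the claimed dichotomy.

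I do not expect a serious obstacle here; the single point demanding care is the inductive maintenance of $f(u_n)f(v_n)<0$ jointly with the strict interiority $u_n<x_n<v_n$, since the entire conclusion rests on $x_{n_0}$ being a genuine interior point of its bracket and on the orientation $u_n<v_n$ never flipping. Once the convex-combination representation of $x_n$ is available, checking the sign condition in each branch of the conditional is routine bookkeeping.
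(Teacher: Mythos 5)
Your proposal is correct and follows essentially the same route as the paper: nestedness of the brackets $[u_n,v_n]$ plus the fact that $x_{n_0}$ becomes an endpoint ($u_{n_0+1}$ or $v_{n_0+1}$) of the next bracket, which traps all later iterates on one side. The paper states these two facts and concludes immediately; you merely fill in the supporting details (the sign invariant $f(u_n)f(v_n)<0$ and the convex-combination argument for $u_n<x_n<v_n$) that the paper leaves implicit.
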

\begin{proof}
Firstly, we notice the decreasing property of the sequence \(\{[u_n,v_n]\}_{n=1}^{\infty}\), i.e.
\[
[u_1,v_1]  \supsetneq  [u_2,v_2] \supsetneq \ldots \supsetneq [u_n,v_n] \supsetneq \ldots
\]
In addition, since for any $n$ the point $x_n$ is either $u_{n+1}$ or $v_{n+1}$, the desired claim follows directly.
\end{proof}

Due to Lemma \ref{lemma 1}, we have that \(	0<r_n<1 \text{ for all } n \in \mathbb{N^*}.\)
Let \(r=\sup\limits_{n}r_n\), we see that \(0<r\leq1\). Now we discuss two cases whether \(r\) is strictly smaller than 1 or not in the following proposition.\\

\begin{prop}\label{Pro1}
Consider the sequences \(\{x_n\}_{n=0}^{\infty}\) generated by Algorithm \ref{al1} and \(\{r_n\}_{n=0}^{\infty}\) defined by \eqref{2}, \eqref{3}. Then the following claims hold true.\\
i) If \(\sup\limits_{n}r_n < 1\) then the sequence \(\{x_n\}_{n=0}^{\infty}\) converges to a zero of \(f\). \\
ii) If \(\sup\limits_{n}r_n = 1\) then there exists a subsequence \(\{x_{k_n}\}_{n=0}^{\infty}\) converges to a zero of \(f\). \\
\end{prop}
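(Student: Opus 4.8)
The plan is to track the bracketing endpoints $u_n, v_n$ rather than $x_n$ directly, exploiting that $\{[u_n,v_n]\}$ is nested (Lemma \ref{lemma2}) with lengths obeying $v_{n+1}-u_{n+1}=r_n(v_n-u_n)$, hence $v_n-u_n=(b-a)\prod_{k=1}^{n-1}r_k$. Since $u_n$ is nondecreasing and $v_n$ nonincreasing, both bounded, I would first record that $u^*:=\lim u_n$ and $v^*:=\lim v_n$ exist with $u^*\le v^*$. I will lean on two structural facts: the bracketing property $f(u_n)f(v_n)<0$ for every $n$, an immediate induction from the update rule, which on passing to the limit gives $f(u^*)f(v^*)\le 0$; and the boundedness $|f|\le M$ of $f$ on the compact interval $[a,b]$.

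For part (i), with $r=\sup_n r_n<1$ each factor satisfies $r_k\le r$, so $v_n-u_n\le (b-a)r^{n-1}\to 0$ and therefore $u^*=v^*=:x^*$. The bracketing limit then reads $f(x^*)^2\le 0$, forcing $f(x^*)=0$. Since $x_n\in[u_{n+1},v_{n+1}]\subseteq[u_n,v_n]$ and these intervals collapse to $\{x^*\}$, the squeeze gives $x_n\to x^*$, which proves (i).

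For part (ii), with $\sup_n r_n=1$, I would split on whether the nested intervals still collapse. If $u^*=v^*$ the argument of (i) applies verbatim and the whole sequence, hence any subsequence, converges to the zero $x^*=u^*=v^*$. The substantive case is $u^*<v^*$: then $v_n-u_n\to v^*-u^*>0$, so $r_n=(v_{n+1}-u_{n+1})/(v_n-u_n)\to 1$. Because exactly one endpoint moves at each step, at least one endpoint moves at infinitely many indices; by the symmetry of \eqref{3} assume it is $u$, moving along an infinite index set $U$, so that $x_n=u_{n+1}$ for $n\in U$. For these $n$, Lemma \ref{lemma 1} gives $r_n=1/(1+|f(u_n)/f(v_n)|)$, and $r_n\to 1$ forces $|f(u_n)/f(v_n)|\to 0$ along $U$; since $|f(v_n)|\le M$ this yields $f(u_n)\to 0$, and continuity together with $u_n\to u^*$ gives $f(u^*)=0$. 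Finally $(x_n)_{n\in U}=(u_{n+1})_{n\in U}\to u^*$ is the desired subsequence converging to a zero.

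I expect the main obstacle to be precisely the case $u^*<v^*$ in part (ii): here the bracketing intervals do not shrink, so the naive nested-interval argument fails, and one must instead read off from $r_n\to 1$ that the function value at the moving endpoint is forced to $0$. The crucial and easily overlooked ingredient making this work is the boundedness of $f$ on $[a,b]$, which converts $|f(u_n)/f(v_n)|\to 0$ into $f(u_n)\to 0$; without compactness of the domain this step would break.
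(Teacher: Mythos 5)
Your proof is correct, and part (i) is essentially the paper's own argument: geometric decay $v_n-u_n\le (b-a)r^{n-1}$, the nested intervals collapsing to a single point, the bracketing property $f(u_n)f(v_n)<0$ passed to the limit, and the squeeze $u_n< x_n< v_n$. For part (ii), however, you take a genuinely different route. The paper extracts a subsequence $r_{m_n}\to 1$ (available since $\sup_n r_n=1$ while $r_n<1$), bounds it through Lemma \ref{lemma 1} and $M=\max_{[a,b]}|f|$ as in \eqref{8}, concludes that certain values of $|f|$ along the sequence tend to $0$, and then needs Bolzano--Weierstrass plus continuity to manufacture a convergent subsequence. You instead split on whether the nested intervals collapse: if $u^*=v^*$ you are back in case (i); if $u^*<v^*$ you observe the stronger fact that the \emph{whole} sequence $r_n\to 1$, and monotone convergence of the endpoint sequences identifies the limit of your subsequence explicitly as $u^*$ (or $v^*$), with no appeal to Bolzano--Weierstrass. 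The paper's route buys brevity (no dichotomy, one uniform argument from $\sup_n r_n=1$); your route buys two things. First, it is constructive about where the subsequence converges — the limit of the moving endpoint — which anticipates the paper's later Lemma \ref{lemma5}~iii), where the second accumulation point $y$ is also shown to be a zero. Second, it is more careful about which function value Lemma \ref{lemma 1} actually controls: the formulas in \eqref{3} involve $f$ at the \emph{old} endpoints $u_{m_n},v_{m_n}$ (specifically the one being discarded), so the paper's inequality \eqref{8}, which features $|f(x_{m_n})|$ at the newly computed point, does not literally follow from Lemma \ref{lemma 1}; the correct bound involves $f$ at the endpoint replaced at step $m_n$, which is some earlier $x_p$ (or $a$, $b$). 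The paper's argument survives that repair, but your version — pinning the vanishing values on $f(u_n)$ for $n\in U$ and then using $x_n=u_{n+1}\to u^*$ — sidesteps the issue entirely.
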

\begin{proof}
\noindent i) If \(r<1\) then \(v_{n+1}-u_{n+1}\leq r(v_n-u_n)\leq \ldots\leq r^n(v_1-u_1)\to0\) when $n$ tends to infinity. This implies that \(\{[u_n,v_n]\}_{n=1}^{\infty}\) is a decreasing nested sequence of non-empty, closed and bounded intervals in $\mathbb{R}$ and their lengths strictly decrease to $0$. So, due to Cantor intersection theorem, see e.g. \cite{Kor}, there exists \(x^*\in[a,b]\) such that $\{x^*\}=\cap_{n}[u_n,v_n]$. Evidently, $\lim\limits_{n\to\infty}x_n=\lim\limits_{n\to\infty}u_n=\lim\limits_{n\to\infty}v_n=x^*$. Since \({f(u_n)f(v_n)<0}\) for all \(n\), due to the continuity of \(f\), let \(n\to\infty\) then \(f(x^*)f(x^*)\leq0\), which only holds if \(f(x^*)=0\). Consequently, the sequence \(\{x_n\}_{n=0}^{\infty}\) generated by Algorithm \ref{al1} converges to a zero $x^*$ of \(f\).\\

\noindent ii) Now we consider the case where \(r=1\). Firstly, we observe that by the construction of the sequence \(\{x_n\}_{n=0}^{\infty}\), for each \(n\in \mathbb{N^*}\), there exist \(p,q \in \mathbb{N}\) depending on \(n\) such that \(u_n=x_p\) and \(v_n=x_q\). Since \(\sup\limits_{n}r_n=1\), there exists a subsequence $\{r_{m_n}\}_{n=1}^{\infty}$ of \(\{r_n\}_{n=1}^{\infty}\) that converges to 1. Let \(M=\max\limits_{x\in[a,b]}|f(x)|\) then the continuity of $f$ implies that \(M<\infty\). Due to Lemma \ref{lemma 1} we have that 
\begin{equation}
\label{8}
r_{m_n} < \frac{1}{1+\frac{|f(x_{m_n})|}{M}} < 1  \mbox{ for all } n\in \mathbb{N^*}.
\end{equation} 
Therefore, let \(n\) tend to infinity we have $\underset{n\rightarrow \infty}{\lim}f(x_{m_n})=0$. Again, Bolzano-Weierstrass theorem applied to the bounded sequence \(\{x_{m_n}\}_{n=1}^{\infty}\) implies the existence of a convergent subsequence \(\{x_{k_n}\}_{n=0}^{\infty}\), whose limit is denoted by \(x^*\). The continuity of $f$, therefore, implies that $f(x^*)=0$. 
\end{proof}

Clearly, from Proposition \ref{Pro1}, the remaining case is where $r=1$, and we need to prove the convergence of the whole sequence \(\{x_n\}_{n=0}^{\infty}\) instead of only the subsequence \(\{x_{k_n}\}_{n=0}^{\infty}\). We will perform this task by disproof, i.e., the following assumption holds.
\begin{ass}\label{ass} Suppose that the sequence \(\{x_n\}_{n=0}^{\infty}\) generated by Algorithm \ref{al1} diverges.
\end{ass}
Under Assumption \ref{ass}, there exists \(\epsilon>0\) and a subsequence \(\{x_{i_n}\}_{n=0}^{\infty}\) of \(\{x_n\}_{n=0}^{\infty}\) such that
\begin{equation}
\label{9}
	|x_{i_n}-x^*|>\epsilon\text{ for all }n\geq0.
\end{equation}
Since the sequence \(\{x_{k_n}\}_{n=0}^{\infty}\) converges to \(x^*\), there exists \(n_0\in \mathbb{N}\) such that
\begin{equation}
\label{10}
|x_{k_n}-x^*|<\epsilon \text{ for all } n\geq n_0.
\end{equation}
By suitably removing a finite number of starting elements of two above sequences, one can assume that \(n_0=0\) and \(i_0>k_0\).

\begin{lemma}\label{lemma3}
i) The point \(x^*\) must lie between \(x_{k_0}\) and \(x_{i_0}\).\\
ii) Two sequences \(\{x_{k_n}\}_{n=0}^{\infty}, \{x_{i_n}\}_{n=0}^{\infty}\) are 2 monotone sequences in opposite directions. 
\end{lemma}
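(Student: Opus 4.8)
The plan is to track the two monotone endpoint sequences produced by the algorithm. I set $u^{*}=\lim_{n}u_{n}$ and $v^{*}=\lim_{n}v_{n}$; these limits exist because $\{u_{n}\}$ is nondecreasing and $\{v_{n}\}$ is nonincreasing (the nesting recorded in the proof of Lemma \ref{lemma2}), and clearly $u^{*}\le v^{*}$. My first step is to rule out $u^{*}=v^{*}$: if the intervals collapsed to a single point, then $x_{n}\in(u_{n},v_{n})$ would force $x_{n}\to u^{*}$, contradicting Assumption \ref{ass}. Hence $u^{*}<v^{*}$, and I will assume from the outset that the divergence parameter obeys $\epsilon<v^{*}-u^{*}$ (shrinking $\epsilon$ only strengthens \eqref{9} and, after discarding finitely many terms, preserves \eqref{10}).

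The heart of the argument will be a structural dichotomy. Each $x_{n}$ equals either $u_{n+1}$ or $v_{n+1}$, so every $x_{n}$ is either a \emph{left point} with $x_{n}=u_{n+1}\le u^{*}$ or a \emph{right point} with $x_{n}=v_{n+1}\ge v^{*}$; consequently no $x_{n}$ lies in the gap $(u^{*},v^{*})$, the left points increase to $u^{*}$, and the right points decrease to $v^{*}$. Since $x_{k_{n}}\to x^{*}$ and $x^{*}\in[u^{*},v^{*}]$, this pins $x^{*}$ to an endpoint; without loss of generality (reflecting $x\mapsto -x$ if needed) I take $x^{*}=u^{*}$. Then every right point exceeds $x^{*}+\epsilon$, while every left point is at most $x^{*}$.

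With this in hand both claims follow from Lemma \ref{lemma2}, whose proof actually shows that if $x_{m}$ is a left point then $x_{p}>x_{m}$ for all $p>m$, and if $x_{m}$ is a right point then $x_{p}<x_{m}$ for all $p>m$. For (ii): each $x_{k_{n}}$ satisfies $|x_{k_{n}}-x^{*}|<\epsilon$, so it cannot be a right point and is therefore a left point; as the indices increase, Lemma \ref{lemma2} then makes $\{x_{k_{n}}\}$ strictly increasing. Dually, each $x_{i_{n}}$ lies more than $\epsilon$ from $x^{*}$; a left point that far from $x^{*}=u^{*}$ would require $u_{i_{n}+1}<u^{*}-\epsilon$, which holds for only finitely many indices because $u_{n}\uparrow u^{*}$, so after trimming finitely many initial terms every $x_{i_{n}}$ is a right point and $\{x_{i_{n}}\}$ is strictly decreasing, i.e.\ the two sequences run in opposite directions. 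For (i), the left point $x_{k_{0}}\le x^{*}$ and the right point $x_{i_{0}}\ge v^{*}>x^{*}$ then sandwich $x^{*}$. (Alternatively, (i) can be obtained directly from Lemma \ref{lemma2} and the bounds \eqref{9}, \eqref{10} without $u^{*},v^{*}$: since $i_{0}>k_{0}$, the point $x_{k_{0}}$ bounds both $x_{i_{0}}$ and $x^{*}=\lim x_{k_{n}}$ on the same side, and $|x_{k_{0}}-x^{*}|<\epsilon<|x_{i_{0}}-x^{*}|$ forces $x^{*}$ to lie between them.)

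The step I expect to be the main obstacle is the bookkeeping around $\epsilon$ and, above all, controlling which side the far sequence approaches from: a priori $\{x_{i_{n}}\}$ could sit on the wrong side of $x^{*}$ or even oscillate across it. The two facts that defuse this are that the no-man's-land $(u^{*},v^{*})$ is genuinely nonempty (so $x^{*}$ is forced onto an endpoint and no $x_{n}$ can sit strictly inside) and that the monotonicity of $u_{n},v_{n}$ confines ``far left'' points to finitely many early indices. Making these quantitative forces the choices $\epsilon<v^{*}-u^{*}$ and, if necessary, trimming a few more initial terms than the excerpt already did, which is harmless since only the tails matter. The remaining case $x^{*}=v^{*}$ is dispatched by the same reflection symmetry, interchanging the roles of left and right points.
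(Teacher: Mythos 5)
Your proof is correct, but it takes a genuinely different route from the paper's. The paper argues purely with the order structure coming from Lemma \ref{lemma2} and the bounds \eqref{9}, \eqref{10}: part (i) is a trichotomy on the relative positions of $x_{k_0}$, $x_{i_0}$, $x^*$, where the bad configuration is excluded because Lemma \ref{lemma2} would trap the whole tail $\{x_n\}_{n>k_0}$ on the wrong side of $x_{k_0}$, contradicting $x_{k_n}\to x^*$; part (ii) fixes $k_n$ and shows no later $x_{i_m}$ can lie above it, since such an $x_{i_m}$ would sit between $x_{k_n}$ and $x_{k_0}$, both within $\epsilon$ of $x^*$, contradicting \eqref{9}, after which Lemma \ref{lemma2} upgrades this to ``no later index exceeds $x_{k_n}$.'' You instead introduce the endpoint limits $u^*=\lim_n u_n$, $v^*=\lim_n v_n$, use Assumption \ref{ass} to force a genuine gap $u^*<v^*$ (otherwise the squeeze gives convergence of $\{x_n\}$), classify every $x_n$ as a left point $\le u^*$ or a right point $\ge v^*$, and pin $x^*$ to an endpoint of the gap. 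This is more structural: it effectively proves ahead of time the content of Lemma \ref{lemma5}(i) (that the two cluster values are exactly $\lim_n u_n$ and $\lim_n v_n$), makes the two-sided clustering picture explicit, and would streamline Lemmas \ref{lemma4} and \ref{lemma5}; the price is the extra normalization $\epsilon<v^*-u^*$ and the additional trimming, whereas the paper's argument is more elementary at this stage and yields monotonicity of the sequences exactly as indexed. Two small repairs to your write-up, both cosmetic: first, the trimming of $\{x_{i_n}\}$ is in fact unnecessary --- since $x_{k_0}$ is a left point with $x_{k_0}=u_{k_0+1}>x^*-\epsilon$ and $\{u_n\}$ is nondecreasing, every left point with index larger than $k_0$ already lies within $\epsilon$ of $x^*=u^*$, so no $x_{i_n}$ (all of which have index $>k_0$) can be a left point; noting this gives the lemma for the original $x_{i_0}$, which matters because claim (i) refers to it. Second, to make ``between'' in (i) non-degenerate you should observe $x_{k_0}\neq x^*$, which holds because the paper is in the case $f(x_n)\neq 0$ for all $n$ while $f(x^*)=0$.
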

\begin{proof}
i) Due to (\ref{9}) and (\ref{10}), we first observe that \(x_{i_0}\) could not lie between \(x_{k_0}\) and \(x^*\).
In case \(x_{k_0}\) is between \(x^*\) and \(x_{i_0}\), we suppose that \(x^*<x_{k_0}<x_{i_0}\), since similar arguments can be made for \(x_{i_0}<x_{k_0}<x^*\). Due to Lemma \ref{lemma2}, \(x_n>x_{k_0}\) for all \(n>k_0\). 
Thus, there does not exist any subsequence of \(\{x_n\}_{n=0}^{\infty}\) converging to \(x^*\), which contradicts the fact that $\lim\limits_{n\to\infty}x_{k_n}=x^*$. Therefore, \(x^*\) must lie between \(x_{k_0}\) and \(x_{i_0}\). \\
ii) Without loss of generality, we suppose that \(x_{k_0}>x^*>x_{i_0}\). The case where \(x_{k_0}<x^*<x_{i_0}\) is on the analogy. Since $\lim\limits_{n\to\infty}x_{k_n}=x^*$, there exists a \(k_{n_1}\) such that $k_{n_1}>i_0>k_0$ and \(x_{k_0}>x_{k_n}>x_{i_0}\). Due to Lemma \ref{lemma2} we obtain that \(\{x_n\}_{n>i_0}\) is upper bounded by \(x_{k_0}\) and lower bounded by \(x_{i_0}\). 
For each fixed \(k_n\), if there exists an \(i_m\) such that \(i_m>k_n\) and \(x_{i_m}>x_{k_n}\) then $x_{k_0}>x_{i_m}>x_{k_n}$, and hence,  \(|x^*-x_{i_m}|<\text{max}\{|x^*-x_{k_n}|,|x^*-x_{k_0}|\}<\epsilon\), which contradicts (\ref{9}). Thus, \(x_{i_m}<x_{k_n}\) for all \(i_m>k_n\).
This, due to by Lemma (\ref{lemma2}), follows that \(x_{j}<x_{k_n}\) for all $j>k_n$, and hence, \(x_{k_{n+1}}<x_{k_n}\). In other words, the sequence \(\{x_{k_n}\}_{n=0}^{\infty}\) is strictly decreasing.
Similarly, the sequence \(\{x_{i_n}\}_{n=0}^{\infty}\) is strictly increasing.
\end{proof}

Due to Lemma \ref{lemma3}, without loss of generality, we suppose that \(x_{k_0}>x^*>x_{i_0}\). In this case, the sequence \(\{x_{k_n}\}_{n=0}^{\infty}\) is strictly decreasing and the sequence \(\{x_{i_n}\}_{n=0}^{\infty}\) is strictly increasing. 
Furthermore, their boundedness implies that they converge to some limits. As we know, \(x^*=\lim\limits_{n\to\infty}x_{k_n}\). 
Let us denote by \(y\) the limit of \(\{x_{i_n}\}_{n=0}^{\infty}\). Following from \eqref{9}, for all $n$, we have $x_{i_n} < x^*-\epsilon$, and hence, \(y \leq x^*-\epsilon <x^*\).
 
\begin{lemma}\label{lemma4}
i) There does not exist any \(x_n\) such that \(y\leq x_n\leq x^*\). \\
ii) The subsequences of \(\{x_n\}_{n=0}^{\infty}\) belonging to the intervals \([x_{i_0},y)\) and \((x^*,x_{k_0}]\) are the strictly monotone. Moreover, they converge to \(y\) and \(x^*\) respectively. 
\end{lemma}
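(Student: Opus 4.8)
The plan is to first locate the limits of the endpoint sequences $\{u_n\}$ and $\{v_n\}$, and then deduce both parts from the fact that every iterate is one of these two endpoints. Since $x_n$ equals either $u_{n+1}$ or $v_{n+1}$ and $x_n\in(u_n,v_n)$, the sequence $\{u_n\}$ is non-decreasing and $\{v_n\}$ is non-increasing; being contained in $[a,b]$ they converge, say to $u_\infty$ and $v_\infty$, with $u_n\le u_\infty$ and $v_n\ge v_\infty$ for every $n$. I claim $v_\infty=x^*$ and $u_\infty=y$. Recall from the proof of Lemma \ref{lemma3} that $x_j<x_{k_n}$ for all $j>k_n$. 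Hence $x_{k_n}$ cannot be the left endpoint $u_{k_n+1}$ (otherwise $x_{k_n+1}\in(x_{k_n},v_{k_n+1})$ would exceed $x_{k_n}$), so $x_{k_n}=v_{k_n+1}$. Thus $\{v_{k_n+1}\}$ is a subsequence of $\{v_m\}$ with limit $\lim_n x_{k_n}=x^*$, which forces $v_\infty=x^*$. The symmetric argument, using $x_j>x_{i_n}$ for $j>i_n$, gives $x_{i_n}=u_{i_n+1}$ and $u_\infty=y$.

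For part i), suppose some iterate satisfied $y\le x_N\le x^*$. If $x_N=u_{N+1}$, then $x_N\le u_\infty=y$, so $x_N=y$; this makes $u_m=y$ for all $m\ge N+1$, whence any element of $\{x_{i_n}\}$ with index beyond $N$ would lie in $(u_{i_n},v_{i_n})=(y,v_{i_n})$, contradicting $x_{i_n}<y$. If instead $x_N=v_{N+1}$, then $x_N\ge v_\infty=x^*$, so $x_N=x^*$, contradicting $f(x_N)\neq 0=f(x^*)$. In either case we reach a contradiction, so no iterate lies in $[y,x^*]$.

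For part ii), note that for $n>i_0$ every iterate $x_n$ lies in $[x_{i_0},x_{k_0}]$ (established in the proof of Lemma \ref{lemma3}), and by part i) avoids $[y,x^*]$; hence the tail of $\{x_n\}$ splits into the iterates falling in $[x_{i_0},y)$ and those falling in $(x^*,x_{k_0}]$. An iterate $x_m\in(x^*,x_{k_0}]$ cannot be a left endpoint, since $u_{m+1}\le u_\infty=y<x^*<x_m$; therefore $x_m=v_{m+1}$. These iterates thus form a subsequence of the non-increasing $\{v_m\}$, they are strictly decreasing (a later iterate lies in $(u_{m+1},v_{m+1})$ and so is strictly below $v_{m+1}=x_m$), and they converge to $v_\infty=x^*$. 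Symmetrically, every iterate in $[x_{i_0},y)$ must be a left endpoint $u_{m+1}$, so these form a strictly increasing subsequence of $\{u_m\}$ converging to $u_\infty=y$.

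I expect the main obstacle to be Step 1, namely correctly identifying the $x_{k_n}$ as right endpoints and the $x_{i_n}$ as left endpoints, and thereby pinning the endpoint limits at $x^*$ and $y$; this is where the monotonicity facts harvested in Lemmas \ref{lemma2} and \ref{lemma3} must be applied with care. Once these limits are fixed, both i) and ii) reduce to an elementary case split on whether a given iterate is a left or a right endpoint.
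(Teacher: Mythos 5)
Your proof is correct, but it takes a genuinely different route from the paper's. The paper proves Lemma \ref{lemma4} working purely with the iterate sequence: for part i), a point $x_n\in[y,x^*]$ would have later iterates on both sides of it (some $x_{i_m}<y\le x_n$ and some $x_{k_m}>x^*\ge x_n$ with $i_m,k_m>n$), which immediately contradicts Lemma \ref{lemma2}; for part ii), monotonicity again comes from Lemma \ref{lemma2} (each $x_n<y$ has some later $x_{i_m}$ above it, so the whole tail lies above it), and convergence follows by sandwiching with $\{x_{i_m}\}$. The endpoint sequences $\{u_n\},\{v_n\}$ never appear. You instead front-load the endpoint analysis: you identify $x_{k_n}=v_{k_n+1}$ and $x_{i_n}=u_{i_n+1}$ from the one-sided bounds harvested in Lemma \ref{lemma3}'s proof, conclude $u_n\uparrow y$ and $v_n\downarrow x^*$ by monotonicity plus a convergent subsequence, and then read off both parts of the lemma by a left-endpoint/right-endpoint case split. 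This is essentially proving the limit assertions of the paper's Lemma \ref{lemma5} i) \emph{before} Lemma \ref{lemma4} rather than after it --- and, importantly, you do so without circularity, since the paper's own proof of Lemma \ref{lemma5} i) relies on Lemma \ref{lemma4}, whereas yours relies only on Lemmas \ref{lemma2} and \ref{lemma3}. The trade-off: the paper's argument is shorter and more local, while yours reorganizes the development so that the subsequent Lemma \ref{lemma5} i) becomes nearly immediate and the identification of which iterates serve as left versus right endpoints (which the paper only exploits later, in the jump argument) is made explicit early.
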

\begin{proof}
i) Suppose the contrary, i.e., there exists \(x_n\) such that \(y\leq x_n\leq x^*\). Then, there exists \( m\in \mathbb{N}\) such that \(i_m>n,k_m>n\) and \(x_{i_m}<x_n<x_{k_m}\), which contradicts Lemma \ref{lemma2}. 

ii) Firstly, we consider the subsequence of \(\{x_n\}_{n=0}^{\infty}\) lying within \([x_{i_0},y)\). Because \(\lim\limits_{m\to\infty}x_{i_m}=y\), for each such \(x_n\), there exists \(m\in\mathbb{N}\) such that \(x_n<x_{i_m}<y\). Lemma \ref{lemma2} implies that \(x_k>x_n\) for all \(k>n\). This leads to the monotonicity of the subsequence being considered. Consequently, this subsequence converges to \(y\).
	Similarly, the subsequence lying within \((x^*,x_{k_0}]\) is strictly decreasing and converges to \(x^*\).  
\end{proof}

Making use of Lemma \ref{lemma4}, for notational convernience, from now we use the notations \(\{x_{i_n}\}_{n=0}^{\infty}\) and \(\{x_{k_n}\}_{n=0}^{\infty}\) to denote the subsequences of \(\{x_n\}_{n=0}^{\infty}\) lying within \([x_{i_0},y)\) and \((x^*,x_{k_0}]\), respectively. 
Notice that, due to Lemma \ref{lemma4}, we have 
$$\{x_n\}_{n=i_0}^{\infty} = \{x_{i_n}\}_{n=0}^{\infty} \  \cup \ \{x_{k_n}\}_{n=p}^{\infty} $$ where $p$ satisfies $k_p>i_0>k_{p-1}\ .$

\begin{lemma}\label{lemma5} Consider the two sequences $\{u_{n}\}_{n=0}^{\infty}$ and $\{v_{n}\}_{n=0}^{\infty}$ generated by Algorithm \ref{al1}. Under Assumption \ref{ass}, the following claims hold true.\\
i) The sequence \(\{u_n\}_{n=0}^{\infty}\) (resp. \(\{v_n\}_{n=0}^{\infty}\)) lies within the interval $[x_{i_0},y)$ (resp. $(x^*,x_{k_0}]$). 
Moreover, \(\lim\limits_{n\to\infty}u_n= y\) and  \(\lim\limits_{n\to\infty}v_n=x^*\). \\
ii) For any large enough $n$, \(|f(v_{i_n})|>|f(u_{i_n})|\) and \(|f(v_{k_n})|<|f(u_{k_n})|\).\\
iii) The point $y$ is also a zero of $f$, i.e., $f(y)=0$.
\end{lemma}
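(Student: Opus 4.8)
The plan is to prove the three claims in order, with part (i) doing the structural work and parts (ii), (iii) following from it. Throughout I work under Assumption \ref{ass}, so that the two distinct cluster limits $y < x^*$ and the gap between them are available; I write $L = [x_{i_0}, y)$ and $R = (x^*, x_{k_0}]$, recalling from Lemma \ref{lemma4} and the decomposition remark that every $x_m$ with $m \ge i_0$ lies in $L \cup R$ and none lies in $[y, x^*]$.

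For part (i), the key structural fact is that the nested intervals capture all future iterates: since $x_m \in (u_m, v_m)$ and $[u_{m+1}, v_{m+1}] \subseteq [u_m, v_m]$, a trivial induction gives $\{x_m : m \ge n\} \subseteq [u_n, v_n]$ for every $n$. As both subsequences are infinite, for each $n$ there are indices $i_m, k_{m'} \ge n$, so $[u_n, v_n]$ meets both clusters and hence $u_n < y < x^* < v_n$. I would then show that every iterate computed in $L$ is assigned to $u$ and every iterate computed in $R$ is assigned to $v$: if an $L$-iterate $x_N$ (with $N \ge i_0$) became $v_{N+1}$, then $[u_{N+1}, v_{N+1}] = [u_N, x_N]$ would lie entirely to the left of $y$ and could not contain the later $R$-iterates, a contradiction; the case of $R$-iterates is symmetric. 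Consequently, from the first $L$-iterate past $i_0$ onward, $u_n$ equals the most recently computed $L$-iterate, so $u_n \in L$, and likewise $v_n \in R$. Finally, the $L$-iterates and $R$-iterates are the monotone subsequences of Lemma \ref{lemma4} converging to $y$ and $x^*$, and $u_n$ (resp. $v_n$) runs through them, so $u_n \to y$ and $v_n \to x^*$.

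For part (ii), I would invoke the interpolation identity already extracted in Lemma \ref{lemma 1}, namely the always-valid relation $\dfrac{x_n - u_n}{v_n - u_n} = \dfrac{|f(u_n)|}{|f(u_n)| + |f(v_n)|}$ together with its complement $\dfrac{v_n - x_n}{v_n - u_n} = \dfrac{|f(v_n)|}{|f(u_n)| + |f(v_n)|}$. Along the $i$-subsequence, part (i) gives $x_{i_n}, u_{i_n} \to y$ and $v_{i_n} \to x^*$, so $\dfrac{x_{i_n} - u_{i_n}}{v_{i_n} - u_{i_n}} \to 0$; hence for large $n$ this ratio is below $\tfrac12$, which is exactly $|f(u_{i_n})| < |f(v_{i_n})|$. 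Along the $k$-subsequence, $x_{k_n}, v_{k_n} \to x^*$ and $u_{k_n} \to y$ force $\dfrac{v_{k_n} - x_{k_n}}{v_{k_n} - u_{k_n}} \to 0$, giving $|f(v_{k_n})| < |f(u_{k_n})|$ for large $n$.

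Part (iii) is then immediate: by part (i) we have $v_{i_n} \to x^*$, so continuity and $f(x^*) = 0$ give $f(v_{i_n}) \to 0$; by part (ii), $|f(u_{i_n})| < |f(v_{i_n})|$ for large $n$, whence $f(u_{i_n}) \to 0$. Since $u_{i_n} \to y$, continuity yields $f(y) = 0$. I expect the main obstacle to be part (i): one must carefully track which cluster each of $u_n$ and $v_n$ occupies and rule out the shrinking interval collapsing into a single cluster. Once the assignment rule ``$L$-iterates become $u$, $R$-iterates become $v$'' and the two limits are in hand, parts (ii) and (iii) are short consequences.
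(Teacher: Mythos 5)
Your proposal is correct and follows essentially the same route as the paper: part (i) via nestedness of the intervals $[u_n,v_n]$ together with the fact that both clusters contain infinitely many iterates (forcing $L$-iterates to update $u$ and $R$-iterates to update $v$), part (ii) via the linear-interpolation identity relating $|f(u_n)|, |f(v_n)|$ to the distances of $x_n$ from the endpoints, and part (iii) by passing to the limit in (ii). The only cosmetic differences are that you phrase (ii) through the normalized ratio tending to $0$ rather than the paper's inequality $|x_{i_n}-u_{i_n}| < x^*-y < |x_{i_n}-v_{i_n}|$, and your (iii) combines $f(x^*)=0$ with one of the two inequalities instead of using both.
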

\begin{proof}
i) Observe that at the n-th step, \(u_n,v_n\) are some elements of the sequence \(\{x_n\}_{n=0}^{\infty}\) and either \(x_n=u_{n+1}\) or \(x_n=v_{n+1}\). If at some \(n_0\)-th step, both \(u_{n_0}\) and \(v_{n_0}\) belong to the same one of intervals \([x_{i_0},y)\) and \((x^*,x_{k_0}]\) such as \((x^*,x_{k_0}]\) then \(x_n\in (x^*,x_{k_0}]\) for all $n>n_0$, which is a contradiction. 
So, \(u_n\in [x_{i_0},y) \) and \(v_n\in (x^*,x_{k_0}]\) for all $n\geq i_0$. Moreover,  we see that $\{u_n\}_{n=0}^{\infty}$ is not decreasing and has $\{x_{i_n}\}_{n=0}^{\infty}$ as a subsequence. Thus, \(\lim\limits_{n\to\infty}u_n= y\). Similarly, \(\lim\limits_{n\to\infty}v_n=x^*\).\\


ii) Firstly, due to the fact that $\lim\limits_{n\to\infty}x_{i_n} \!=\!\lim\limits_{n\to\infty}u_{i_n}\!=\!y$, $\lim\limits_{n\to\infty}x_{k_n}\!=\!\lim\limits_{n\to\infty}v_{k_n}\!=\!x^*$, we see that for large enough $n$, the following inequality holds.
\[ |x_{i_n}-u_{i_n}| < x^*-y < |x_{i_n}-v_{i_n}|.
\]
This inequality, together with the identity $\bigg|\cfrac{f(v_{i_n})}{f(u_{i_n})}\bigg|=\cfrac{|x_{i_n}-v_{i_n}|}{|x_{i_n}-u_{i_n}|}$\ , imply \linebreak that 
\(|f(v_{i_n})|>|f(u_{i_n})|\). Similarly, we have \(|f(v_{k_n})|<|f(u_{k_n})|\).

iii) Since ii) taking the limits as $n\to\infty$\ , we obtain that \(|f(x^*)|\geq|f(y)|\) and \(|f(x^*)|\leq|f(y)|\). This implies $|f(y)|=|f(x^*)|=0$, i.e., $f(y)=0$.   
\end{proof}

From Lemma \ref{lemma5}, since both sequences \(\{f(x_{i_n})\}_{n=0}^{\infty}\) and $\{f(x_{k_n})\}_{n=0}^{\infty}$ converge to zero, we have
\begin{equation}
\label{15}
	\lim\limits_{n\to\infty}|f(x_n)|=0.
\end{equation}\\
Now, we point out a sub-sequence of the sequence \(\{x_n\}_{n=0}^{\infty}\) such that its image under the function \(|f|\) is strictly increasing.
Firstly, denote by \( \chi:=\chi_{(x^*,x_{k_0}]}\) the characteristic function of the interval \((x^*,x_{k_0}]\), i.e.,
\begin{equation*}
\chi(x)=\left\{\begin{array}{ll}
1 & \text{ for all } x\in(x^*,x_{k_0}]\text{ ,}\\
0 & \text{ otherwise.}
\end{array}\right.
\end{equation*} 
Because there are infinitely many \(x_n\) in each of two intervals \([x_{i_0},y)\) and  \((x^*,x_{k_0}]\), there are infinitely many \(x_n\) such that each of the following two cases occurs. 
\begin{itemize}
	\item[i)] \(\chi(x_n)-\chi(x_{n+1})=1\), i.e., $x_n\in(x^*,x_{k_0}]$ and $x_{n+1}\in[x_{i_0},y)$.
	\item[ii)] \(\chi(x_n)-\chi(x_{n+1})=-1\), i.e., $x_n\in[x_{i_0},y)$ and $x_{n+1}\in(x^*,x_{k_0}]$.
\end{itemize}
Now, we call \(n\) a jump if \(\chi(x_n)-\chi(x_{n+1})\neq0\). Let $p$, $q$ be two consecutive jumps such that
\(\chi(x_q)-\chi(x_{q+1})=1\) and \(\chi(x_p)-\chi(x_{p+1})=-1\). For large enough \(p,q\), we consider two cases where \(p>q\) and \(p<q\).\\
\begin{figure}[H]
	\definecolor{xdxdff}{rgb}{0.49019607843137253,0.49019607843137253,1}
	\definecolor{ududff}{rgb}{0.30196078431372547,0.30196078431372547,1}
	\begin{tikzpicture}[line cap=round,line join=round,>=triangle 45,x=1cm,y=1cm]
	\draw [line width=2pt] (-7,-1)-- (5,-1);
	\draw (-4.76,-1.2) node[anchor=north west] {$\ldots$};
	\begin{scriptsize}
	\draw [fill=ududff] (-7,-1) circle (2.5pt);
	\draw[color=red] (-7,-1.3) node {$x_{i_0}$};
	\draw [fill=ududff] (5,-1) circle (2.5pt);
	\draw[color=red] (5,-1.3) node {$x_{k_0}$};
	\draw [fill=xdxdff] (-2,-1) circle (2.5pt);
	\draw[color=red] (-2,-1.3) node {$y$};
	\draw [fill=xdxdff] (-0.33333333333333304,-1) circle (2.5pt);
	\draw[color=red] (-0.333,-1.3) node {$x^*$};
	\draw [fill=xdxdff] (-5.74,-1) circle (2.5pt);
	\draw[color=red] (-5.74,-1.3) node {$x_{q\!+\!1}$};
	\draw [fill=xdxdff] (-3.9,-1) circle (2.5pt);
	\draw[color=red] (-3.9,-1.3) node {$x_p$};
	\draw [fill=xdxdff] (1.28,-1) circle (2.5pt);
	\draw[color=red] (1.28,-1.3) node {$x_{p\!+\!1}$};
	\draw [fill=xdxdff] (2.82,-1) circle (2.5pt);
	\draw[color=red] (2.82,-1.3) node {$x_q$};
	\draw [fill=xdxdff] (-5,-1) circle (2.5pt);
	\draw[color=red] (-5,-1.3) node {$x_{q\!+\!2}$};
	\end{scriptsize}
	\end{tikzpicture}
	\caption{In case $p>q$}
\end{figure}
In case $p>q$, due to $x_{q\!+\!1},\ldots,x_p \in [x_{i_0},y)$, it follows by i) of Lemma 5 that $x_m\!=\!u_{m\!+\!1}$ for all $m=q+1,\ldots,p$. So, $v_m=v_{m\!+\!1}$ for all $m=\!q\!+\!1,\ldots,p$, i.e., $v_{q\!+\!1}\!=\!v_{q\!+\!2}=\!\ldots=\!v_{p\!+\!1}$. On the other hand, $x_q \in (x^*,x_{k_0}]$, so $x_q=v_{q\!+\!1}$. Since ii) of Lemma 5, for large enough $p$ we have $|f(u_{p\!+\!1})|>|f(v_{p\!+\!1})|$. Therefore, 
%
\[ |f(x_p)|\!=\!|f(u_{p\!+\!1})|>|f(v_{p\!+\!1})|\!=\!|f(v_p)|\!=\! \ldots \!=\!|f(v_{q\!+\!1})| \!=\!|f(x_q)|. \\
\]
\begin{figure}[H]
	\definecolor{xdxdff}{rgb}{0.49019607843137253,0.49019607843137253,1}
	\definecolor{ududff}{rgb}{0.30196078431372547,0.30196078431372547,1}
	\begin{tikzpicture}[line cap=round,line join=round,>=triangle 45,x=1cm,y=1cm]
	\draw [line width=2pt] (-7,-1)-- (5,-1);
	\draw (2.44,-1.2) node[anchor=north west] {$\ldots$};
	\begin{scriptsize}\draw [fill=ududff] (-7,-1) circle (2.5pt);
	\draw[color=red] (-7.02,-1.3) node {$x_{i_0}$};
	\draw [fill=ududff] (5,-1) circle (2.5pt);
	\draw[color=red] (4.96,-1.3) node {$x_{k_0}$};
	\draw [fill=xdxdff] (-2,-1) circle (2.5pt);
	\draw[color=red] (-2.02,-1.3) node {$y$};
	\draw [fill=xdxdff] (-0.33333333333333304,-1) circle (2.5pt);
	\draw[color=red] (-0.34,-1.3) node {$x^*$};
	\draw [fill=xdxdff] (-5.24,-1) circle (2.5pt);
	\draw[color=red] (-5.28,-1.3) node {$x_p$};
	\draw [fill=xdxdff] (-4,-1) circle (2.5pt);
	\draw[color=red] (-4,-1.3) node {$x_{q\!+\!1}$};
	\draw [fill=xdxdff] (1.28,-1) circle (2.5pt);
	\draw[color=red] (1.24,-1.3) node {$x_q$};
	\draw [fill=xdxdff] (3.44,-1) circle (2.5pt);
	\draw[color=red] (3.4,-1.3) node {$x_{p\!+\!1}$};
	\draw [fill=xdxdff] (2.04,-1) circle (2.5pt);
	\draw[color=red] (2.08,-1.3) node {$x_{q\!-\!1}$};
	\end{scriptsize}
	\end{tikzpicture}
	\caption{In case $p<q$}
\end{figure}
Analoguously, if $q>p$ then 
\[ |f(x_q)|=|f(v_{q\!+\!1})|>|f(u_{q\!+\!1})|=|f(u_q)|=\ldots = |f(u_{p\!+\!1})| =|f(x_p)|.
\]
To sum up, if \(q<p\) are two consecutive jumps then \(|f(x_q)|<|f(x_p)|\). Let us denote by \(\{j_n\}_{n=0}^{\infty}\) the sequence of jumps. Since some large enough $n_0$, the sequence \(\{|f(x_{j_n})|\}_{n=n_0}^{\infty}\) is strictly increasing. This contradicts to \eqref{15}, and therefore, implies that Assumption \ref{ass} is wrong. This completes the proof of Theorem \ref{thm1}.

\section{Conclusion}
In this paper, we prove the convergence of Regula Falsi method for all continuous functions. In the future, it is of great interest to provide theoretical proofs about the convergence rate of the method. 

\vfill\eject

\begin{thebibliography}{10}
	\bibitem{anh} Anh, P.K. (1996). \textit{Giai Tich So} (in Vietnamese), Ha Noi: Vietnam National University Press, pp. 100-102.
	\bibitem{BF} Burden, R.L.,Faires, J.D. (2011). \textit{Numerical Analysis}, 9th ed. Boston, MA: Brooks/Cole, Cengage Learning, pp. 73-75.
	\bibitem{G} Gautschi, W. (2012). \textit{Numerical Analysis}, 2sd ed.  Basel: Birkhauser. 
	\bibitem{sb} Stoer, J., Bulirsch, R. (2002). \textit{Introduction to Nummerical Analysis}, 3rd ed. (Bartels, R., Gautschi, W., Witzgall, C., trans.) New York: Springer, pp. 339-340.
	\bibitem{Kor} Korner, T. W. (2003). \textit{A Companion to Analysis: A Second First and First Second Course in Analysis}, 2sd ed. American Mathematical Society. 
	\bibitem{SR} Sureshkumar, R., Rutledge, G.C., The Method of False Position, \url{http://web.mit.edu/10.001/Web/Course_Notes/NLAE/node5.html}
	\bibitem{vinh} Vinh, L.T. (2007). \textit{Giao trinh giai tich so} (in Vietnamese), Ha Noi: Vietnam Science and Technics Publishing House, pp. 26-28.
	\bibitem{w} Weisstein, E., Method of False Position \textemdash From MathWorld, A Wolfram Web Resource, \url{http://mathworld.wolfram.com/MethodofFalsePosition.html}

 
\end{thebibliography}
\end{document}